\newcommand{\ignore}[1]{}
\newtheorem{theorem}{Theorem}
\newtheorem{lemma}{Lemma}
\newtheorem{lemma_repeat}{Repeat of Lemma}
\newtheorem{theorem_repeat}{Repeat of Theorem}
\newenvironment{proof}{\emph{Proof. } }{\hfill $\Box$}
\title{Error Bounds on Derivatives during Simulations}
\author{Gregory Bard\thanks{Dept. of Math., Stat., and Comp. Sci., Jarvis Hall Science Wing, University of Wisconsin---Stout, Menomonie, Wi, 54751.} \ and Alexander Basyrov}
\begin{document}
\maketitle

\pagestyle{empty}
\thispagestyle{empty}

\vspace{-0.2in}
\begin{abstract}
The methods commonly used for numerical differentiation, such as the ``center-difference formula''
and ``four-points formula'' are unusable in simulations or real-time data analysis because they
require knowledge of the future. In Bard'11, an algorithm was shown that generates formulas that
require knowledge only of the past and present values of $f(t)$ to estimate $f'(t)$. Furthermore,
the algorithm can handle irregularly spaced data and higher-order derivatives. That work did not
include a rigorous proof of correctness nor the error bounds. In this paper, the correctness and
error bounds of that algorithm are proven, explicit forms are given for the coefficients,
and several interesting corollaries are proven.
\end{abstract}

\emph{Keywords:} Simulations, Numerical Differentiations, Discretization Error, Real-Time Data Analysis.

\emph{MSC2010 Subject Classification:} 
65D25, 65D15, 68U20, 68W30, 68W40.

\section{Introduction}
In numerical analysis, one is often taught that for positive $h$, rather than use
$$ f'(t) \approx \frac{f(t) - f(t-h)}{h} \hspace{0.5in}\mbox{(the ``backward difference'' formula)}$$
it is better to use
$$ f'(t) \approx \frac{f(t+h) - f(t-h)}{2h} \hspace{0.5in}\mbox{(the ``center difference'' formula)}$$
or better still
$$ f'(t) \approx \frac{4}{3} \left [ \frac{f(t+h) - f(t-h)}{2h} \right ] - \frac{1}{3} \left [ \frac{f(t+2h) - f(t-2h)}{4h} \right ] \hspace{0.5in}\mbox{(the ``four points'' formula)}$$
because of the error of the approximations.
The error in the first case is $O(h)$, compared to $O(h^2)$ in the second case and $O(h^4)$, where
$O(h^m)$ indicates some function that is bounded when divided by $h^m$ for all sufficiently small values of the step $h$.

In simulations or in real-time data analysis, these more advanced formulas are of no use,
because they require knowledge of the future---i.e. $f(t+h)$ and $f(t+2h)$. However, the future
is either unknown (in the case of real-time data analyses) or not yet calculated (in the case of simulations).

In a previous paper [1], Bard showed how to compute similar formulas such as
$$ \frac{1}{4} f\left ( t - 4h \right ) -
\frac{4}{3} f\left ( t - 3h \right ) +
3 f\left ( t - 2h \right ) -
4 f\left ( t - h \right ) +
\frac{25}{12} f\left ( t \right ) =
f'\left (t \right )
- \frac{1}{5} h^4 f^{(5)}\left (t \right )
+ \frac{1}{3} h^5 f^{(6)}\left (t \right )
+ \cdots
 $$
that work with only the
knowledge of the past and present, but that do not require knowledge of the
future. Furthermore, the algorithm presented there
also allows for irregularly spaced sampling of $f(t)$, and arbitrary numbers of data points,
as well as second and higher derivatives. The error bounds of the formulas from that paper were handled
heuristically, and only a sketch of the proof of correctness was given. Here we give a rigorous
proof, explicit forms of the coefficients of those formulas, and several rigorous statements about the error bounds. The obtained formulas could be used to reduce the running time while keeping the accuracy of derivative estimates.

For a survey of previous work in this topic, see Section~IX of [1]; that paper also contains several extended examples,
and SAGE code for carrying out the algorithm. This document can be thought of as a sequel to that paper.

\ignore{
\section{The Algorithm}

\begin{enumerate}
\item [Input:] Any $n$ distinct real numbers $\delta_1, \delta_2, \ldots, \delta_n$, and an integer $k$ such that $0<k<n$.
\item [Output:] A formula for the $k$th derivative of any ?-times differentiable $f(t)$, in terms of $f(t + \delta_j h)$,
for any positive real $h$, with error proportional to $h^{n-k}$ and the $n$th derivative of $f(t)$.
\item Define the $n\times n$ matrix $A$ such that $A_{ij} = \delta_j^{i-1}$.
\item Define the $n$-dimensional vector $\vec{b}$ to be all zeros, except the $b_{k+1} = k!$.
\item Let the $n$-dimensional vector $\vec{c}$ be the solution to $A\vec{c}=\vec{b}$.
\item [Note:] The matrix $A$ is always non-singular so $\vec{c}$ will always exist and be unique.
\item Return the formula
$$ \frac{1}{h^k} \sum_{j=1}^{j=n} c_j f( t + \delta_j h ) \approx f^{(k)}( t ) $$
\end{enumerate}

We will now prove that
$$ \frac{1}{h^k} \sum_{j=1}^{j=n} c_j f( t + \delta_j h ) = f^{(k)}( t ) + \sum_{j=1}^{j=n} \sum_{i=n}^{\infty} c_j f^{(i)}( t ) \frac{\delta_j^i h^{i-k}}{i!} $$
}

\section{Initial Calculations}

Assuming that $f$ is at least $(n-1)$ times differentiable, we use the following form of Taylor's Theorem
$$ f( t + h ) = \left( \sum_{i=0}^{n-1} f^{(i)}(t) \frac{h^i}{i!} \right) + R_n$$
where $f^{(i)}$ represents the $i$th derivative of $f(t)$, and $R_n$ is the remainder term in one of the well-known forms.
Two of those forms will be explored in detail during Section~4.
We define the 0th derivative of a function to be the function itself.

Let us suppose that $\delta_1, \delta_2, \ldots, \delta_n$ are distinct real numbers. Using the formula above we arrive at the following representations for $f(t+\delta_j h)$
\begin{equation}
    \label{eq:taylor_for_f_with_deltas}
  f(t + \delta_j h) =
  \left(\sum_{i=0}^{n-1} f^{(i)}(t) \frac{\delta^i_j h^i}{i!} \right) + R_{n,j}
\end{equation}

For any integer $k$ such that $0<k<n$, any real non-zero $h$ we are looking for a linear combination of terms $f(t + \delta_j h)$ that approximates $f^{(k)}(t)$ term up to some reasonable remainder term. More precisely, we start with the linear combination
$ c_1 f(t + \delta_1 h) + \cdots + c_n f(t+ \delta_n h)$,
rewrite it using \eqref{eq:taylor_for_f_with_deltas}, and look for the values of $c_j$ that eliminate as many terms as possible except for the terms involving $f^{(k)}(t)$.
\begin{eqnarray}
\sum_{j=1}^{n} c_j f( t + \delta_j h ) & = &
\sum_{j=1}^{n} \left [ c_j \left(\sum_{i=0}^{n-1} f^{(i)}( t ) \frac{\delta_j^i h^i}{i!} \right) + c_j R_{n,j}\right ]\\
\label{eq:regrouped_version}
& = &
\left[ \sum_{i=0}^{n-1} f^{(i)}(t) \frac{h^i}{i!}
\left( \sum_{j=1}^{n} c_j \delta_j^i \right) \right] + \sum_{j=1}^n c_j R_{n,j}
\end{eqnarray}

It is useful to note that in the above
we interchanged two summation symbols, which was legal because both sums are finite;
we also pulled out of the summation over $j$ any factors that do not depend on $j$. So far, everything we have said is true for all real $c_j$s.

If the numbers $c_1, \ldots, c_n$ are chosen so that the following conditions are satisfied
\begin{eqnarray}
\label{eq:equations_vanish}
\sum_{j=1}^n c_j \delta^i_j & = & 0 \qquad \text{for} \  0 \le i \le n-1,  i \not= k \\ 
\label{eq:equation_k_factorial}
\sum_{j=1}^n c_j \delta^k_j & = & k! 
\end{eqnarray}
then \eqref{eq:regrouped_version} dramatically simplifies to become
\begin{equation}
  \sum_{j=1}^{n} c_j f( t + \delta_j h ) = f^{(k)}(t) h^k + \sum_{j=1}^n c_j R_{n,j}
\end{equation}
which yields the desired result
\begin{equation}
  \label{eq:final_result_for_algorithm}
  \frac{1}{h^k} \sum_{j=1}^{n} c_j f( t + \delta_j h ) = f^{(k)}(t) + \frac{1}{h^k} \sum_{j=1}^n c_j R_{n,j}
\end{equation}

Accordingly, to simplify matters, we set
\begin{equation}
  \label{eq:basic_form_of_error_term}
  R = - \frac{1}{h^k} \sum_{j=1}^n c_j R_{n,j}
  \hspace{0.25in} \mbox{ therefore } \hspace{0.25in}
  \frac{1}{h^k} \sum_{j=1}^{n} c_j f( t + \delta_j h ) = f^{(k)}(t) + R
\end{equation}
Thus $R$ represents the error of the formula, and we will seek to place bounds on $R$ in Section~\ref{bounds:sec}.

\subsection{Existence and Uniqueness}

We still have to prove the existence of
numbers $c_1, \ldots, c_n$ that satisfy \eqref{eq:equations_vanish} and \eqref{eq:equation_k_factorial}. In fact, we will prove that such numbers exist, and
that they are unique.
The equations represented by \eqref{eq:equations_vanish} and \eqref{eq:equation_k_factorial} could be rewritten as a system of linear equations $A \vec c = \vec b$ with $A_{ij} = \delta_j^{i-1}$ and
$b_{k+1} = k!$ with the rest of coordinates of $\vec b$ are zero.
Matrix $A$ is an example of a Vandermonde matrix; $A$ is invertible precisely if and only if all of the $\delta_1, \ldots, \delta_n$ are
distinct (see Equation~\eqref{basic_vandermonde_determinant}).
Because matrix $A$ is invertible, there exists unique solution $\vec c = A^{-1} \vec b$, which is exactly what is required. In fact, vector $\vec c$ is a column of matrix $A^{-1}$ multiplied by $k!$.

The above calculations could be stated as the following algorithm.

\subsection{The Algorithm}

\begin{enumerate}
\item [Input:] Any $n$ distinct real numbers $\delta_1, \delta_2, \ldots, \delta_n$, and an integer $k$ such that $0<k<n$.
\item [Output:] A formula for the $k$th derivative of any $n$-times differentiable $f(t)$, in terms of $f(t + \delta_j h)$,
for any positive real $h$, with error proportional to $h^{n-k}$ and the $n$th derivative of $f(t)$.
\item Define the $n\times n$ matrix $A$ such that $A_{ij} = \delta_j^{i-1}$.
\item Define the $n$-dimensional vector $\vec{b}$ to be all zeros, except the $b_{k+1} = k!$.
\item Let the $n$-dimensional vector $\vec{c}$ be the solution to $A\vec{c}=\vec{b}$.
\item Return the formula
$$ \frac{1}{h^k} \sum_{j=1}^{j=n} c_j f( t + \delta_j h ) \approx f^{(k)}( t ) $$
\end{enumerate}

\subsection{The Condition Number of $A$}


We would like to note that
because $A$ is a Vandermonde matrix, some readers maybe concerned about the numerical stability of solving $A\vec{c}=\vec{b}$.
This is because the condition number of a Vandermonde matrix can become extremely tiny as any two data points move
close together. However, this is not an issue, because in any practical situation numbers $\delta_j$ would be integers that do not depend on the value of the step $h$.
Furthermore, the value of $n$ would satisfy $n<20$. Moreover, using computer algebra packages,
one can solve $A\vec{c}=\vec{b}$ using exact rational arithmetic in less than a second, totally avoiding any floating-point computations of any kind.
Therefore, the condition number of $A$ is immaterial, and $\vec{c}$ will be known exactly.


%


\section{Explicit Formulas for the $c_i$s}

While the algorithm will very quickly produce the correct values of $\vec{c}$, it might be useful
to have some explicit formulas in order to deduce properties of the $c_i$s. We will accomplish this
by first finding some formulas for the determinants of the minors of a Vandermonde, and then
use those with Cramer's rule to obtain the explicit formulas for the $c_i$s.

\subsection{Some Useful Notation}
We define a particular $n \times n$-matrix $V(y_1, y_2, \ldots, y_n)$ via
$V_{ij} = y_{j}^{i-1}$.
\ignore{Some textbooks call $V$ the Vandermonde Matrix, while other books call
$V^T$ the Vandermonde matrix. }

Recall that $A=V(\delta_1, \delta_2, \ldots, \delta_n)$ in our
algorithm. We will abbreviate this $A=V(\vec{\delta})$. Furthermore, we will use a hat to
indicate the removal of one of the deltas. Explicitly,
$$ \widehat{\delta_j} = (\delta_1, \ldots, \delta_{j-1}, \delta_{j+1}, \ldots, \delta_n) $$

Then it shall be useful to denote by $\Delta$ the largest absolute value of any $\delta$.
Let $\epsilon_j$ be the distance from $\delta_j$ to the nearest other $\delta$. Finally,
let $\epsilon$ signify the smallest of all the $\epsilon_j$s. Explicitly,
\begin{equation}
\Delta = \max \{ |\delta_1|, \ldots, |\delta_n| \}
\quad\mbox{ and }\quad
    \epsilon_j = \min_{i \not= j} | \delta_j - \delta_i |
\quad\mbox{ with }\quad
    \epsilon = \min \{ \epsilon_1, \ldots, \epsilon_n \}
\label{eq:definition_of_stuff}
\end{equation}

Since we assumed that $h>0$, and $\delta_1, \ldots, \delta_n$ are real numbers, we define the \emph{interval of consideration} $I$ to be the smallest closed interval containing all of $(t + \delta_i h)$ values:
\begin{equation}
\label{eq:definition_of_I}
  I = \{x:\  \min_{i=1,\ldots, n} ( t + \delta_i h) \le x \le \max_{i=1,\ldots, n} (t + \delta_i h) \}
\end{equation}

\subsection{The Determinants of the Minors of a Vandermonde Matrix}
In any case, the determinant of $V$, which we denote $v$,
has a well-known formula
\begin{equation}\label{basic_vandermonde_determinant}
v(y_1, y_2, \ldots, y_n) = \prod_{1 \le i < j \le n} \left ( y_j - y_i \right )
\end{equation}
which is simply the product of the differences of all possible distinct pairs of $y_i$ and $y_j$,
taking care to always subtract the $y$ of higher index from the $y$ of lower index.

The following lemma establishes a formula for the determinant of a minor of a Vandermonde matrix. We presume that this must have been known for quite some time, but we could not find a proof
of it anywhere, and we find the following proof both short and simple.

\begin{lemma}
Consider $V(y_1, y_2, \ldots, y_n)$, with all $y$s distinct.
Let $M(i, j)$ denote the minor formed by
deleting the $i$th row and $j$th column from $V$.
The determinant of $M(i,j)$ is given by
\begin{equation}
m(i,j) = \det M(i,j) =
v( \widehat{y_j} ) \sigma_{n-1,n-i}(\widehat y_j)
\end{equation}
where $\sigma_{n-1,n-i}$ is the $(n-i)$-th-degree symmetric polynomial in $n-1$ variables
\end{lemma}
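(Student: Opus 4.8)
The plan is to introduce an auxiliary Vandermonde matrix carrying a free variable $x$ and to recover $m(i,j)$ by comparing two evaluations of its determinant. Concretely, I would form the $n \times n$ Vandermonde $W(x)$ whose first $n-1$ columns are the Vandermonde $V(\widehat{y_j})$ in the remaining variables and whose last column is $(1, x, x^2, \ldots, x^{n-1})^{T}$; in the notation of the excerpt this is $V(\widehat{y_j}, x)$ with $x$ placed last. The whole proof is then a coefficient comparison in $x$.

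First I would compute $\det W(x)$ from the product formula \eqref{basic_vandermonde_determinant}. Since $x$ is the largest-indexed argument, the factors split cleanly into those not involving $x$ and those of the form $(x - y_k)$, giving
\begin{equation}
\det W(x) = v(\widehat{y_j}) \prod_{k \ne j}(x - y_k).
\end{equation}
I would then expand this product in elementary symmetric polynomials of the $n-1$ variables $\widehat{y_j}$,
\begin{equation}
\prod_{k \ne j}(x - y_k) = \sum_{\ell = 0}^{n-1} (-1)^\ell\, \sigma_{n-1,\ell}(\widehat{y_j})\, x^{n-1-\ell},
\end{equation}
so that the coefficient of $x^{i-1}$ on this side is $(-1)^{n-i} v(\widehat{y_j})\, \sigma_{n-1,n-i}(\widehat{y_j})$, obtained by setting $n-1-\ell = i-1$.

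The second evaluation is a Laplace (cofactor) expansion of $\det W(x)$ along its last column, whose row-$i$ entry is $x^{i-1}$. The key observation is that the submatrix of $W(x)$ obtained by deleting row $i$ and the last column is exactly $M(i,j)$: deleting the last column restores the Vandermonde in $\widehat{y_j}$, and deleting row $i$ removes the power $i-1$, which matches deleting row $i$ and column $j$ from the original $V$. This yields
\begin{equation}
\det W(x) = \sum_{i=1}^{n} (-1)^{i+n}\, m(i,j)\, x^{i-1},
\end{equation}
whose coefficients are manifestly independent of $x$. Equating coefficients of $x^{i-1}$ in the two expressions gives $(-1)^{i+n} m(i,j) = (-1)^{n-i} v(\widehat{y_j})\, \sigma_{n-1,n-i}(\widehat{y_j})$, and since $i+n$ and $n-i$ have the same parity the signs cancel, producing the claimed identity.

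The part I expect to require the most care is not the algebra but the bookkeeping: verifying that the deleted-row/deleted-column submatrix of $W(x)$ coincides with $M(i,j)$ under the chosen ordering of rows (powers $0,\ldots,n-1$) and columns (the variables $\widehat{y_j}$), and confirming the sign $(-1)^{i+n}$ from the cofactor position against the $(-1)^{n-i}$ coming from the symmetric-polynomial expansion. Once those two sign contributions are pinned down and seen to agree, the remaining coefficient comparison is routine.
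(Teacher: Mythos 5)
Your proof is correct and is essentially the paper's own argument: both introduce a Vandermonde determinant carrying a free variable $x$, read it once as a cofactor expansion whose coefficients are the minors $m(i,j)$ and once as $v(\widehat{y_j})\prod_{k\ne j}(x-y_k)$ expanded in elementary symmetric polynomials, and then equate coefficients of $x^{i-1}$. The only differences are cosmetic --- you place $x$ in the last column and invoke the product formula \eqref{basic_vandermonde_determinant} directly, while the paper substitutes $x$ for $y_j$ in place and recovers the factorization from the roots $x=y_k$ plus the leading coefficient --- and your sign bookkeeping (the agreement of $(-1)^{i+n}$ with $(-1)^{n-i}$, and the identification of the deleted-row/deleted-column submatrix of $W(x)$ with $M(i,j)$) checks out.
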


The proofs of the lemma can be found in Appendix~\ref{lemma_1_proof_appendix}, of the full version of the paper on {\tt arXiv.org}.






\subsection{Cramer's Rule}
Now suppose we want an explicit formula for $c_j$ in $\vec{c}$, as produced by our algorithm.
As in our algorithm and Lemma~1, we define $A = V(\vec{\delta})$.
Using Cramer's Rule, we can take $A$ but replace column $j$ with $\vec{b}$; the determinant
of that modified matrix, divided by the determinant of the original $A$, equals the value of $c_j$.
We should expand the determinant of the modified matrix on column $j$, which is equal to
$\vec{b}$, because $\vec{b}$ is zero in all but one entry. We would then obtain:

\begin{eqnarray}
\label{eq:formula_for_cj_raw}
c_j & = &
\frac{ (-1)^{1+j} A_{1j} \det M_{1j} + (-1)^{2+j} A_{2j} \det M_{2j} + \cdots + (-1)^{n+j} A_{nj} \det M_{nj} }{\det A } \nonumber \\
 & = & (-1)^{k+j+1} k! \frac{ v( \widehat \delta_j )
\sigma_{n-1,n-k-1}(\widehat \delta_j ) }
{ v( \vec{\delta} ) }
\end{eqnarray}
\ignore{where $\widehat \delta_j = (\delta_1, \ldots, \delta_{j-1}, \delta_{j+1}, \ldots, \delta_n)$ and $\delta = (\delta_1, \ldots, \delta_n)$.}

However, we should observe that
$$ \frac{ v( \widehat \delta_j ) }
{ v( \vec{ \delta } ) } = \frac{1}{
\left ( \prod_{1 \le i < j} \left ( \delta_j - \delta_i \right ) \right )
\left ( \prod_{j < i \le n} \left ( \delta_i - \delta_j \right ) \right ) } $$
because all terms in the denominator not involving $\delta_j$ will also be found in the numerator.
Therefore, we can conclude
$$ c_j = (-1)^{k+j+1} \frac{k!\sigma_{n-1,n-k-1}( \widehat \delta_j  ) }
{\left ( \prod_{1 \le i < j} \left ( \delta_j - \delta_i \right ) \right )
\left ( \prod_{j < i \le n} \left ( \delta_i - \delta_j \right ) \right ) } $$
or simply
\ignore{$$ c_j = (-1)^{k+j+1} \frac{k!\sigma_{n-1,n-k-1}( \widehat \delta_j  ) }
{ (-1)^{n-j-1} \prod_{i\not=j} \left ( \delta_j - \delta_i \right ) } $$}
$$ c_j = \frac{(-1)^{n-k} k!\sigma_{n-1,n-k-1}( \widehat \delta_j  ) }
{\prod_{i\not=j} \left ( \delta_j - \delta_i \right ) } $$

When we specialize the above formula to the case $\delta_i= -i$, for $k=1$ we obtain
\[  c_j = (-1)^j \binom{n}{j} \left( - \frac{1}{j} + 1 + \frac{1}{2} + \cdots + \frac{1}{n}\right),
\]
and for $k=2$ we obtain
\[   c_j = (-1)^j 2 \binom{n}{j} \left( \sigma_{n,2}\left(1, \frac12, \ldots, \frac1n\right) - \frac1j \left(- \frac{1}{j} + 1 + \frac{1}{2} + \cdots + \frac{1}{n}\right) \right)
\]
The proof of the special cases can be found in Appendix \ref{appendix_special_cases}, of the full version of the paper on {\tt arXiv.org}.


\ignore{
\subsubsection{Lower Degree Versions}

If $n-k-1$ (for Alex's Version) or $n-j$ (for Greg's Version) is too high a degree for convenience, one
can substitute a lower-degree version using the following identity:
$$ \frac{\sigma_{n, d}(y_1, y_2, y_3, \ldots, y_n) }{ \sigma_{n, n}(y_1, y_2, y_3, \ldots, y_n) } =
\frac{\sigma_{n, d}(y_1, y_2, y_3, \ldots, y_n) }{ (y_1)(y_2)(y_3)\cdots(y_n) } =
\sigma_{n, n-d}(y_1^{-1}, y_2^{-1}, y_3^{-1}, \ldots, y_n^{-1})
$$
provided that all the $y_i$s are non-zero---which might not necessarily be true. For example, if
one uses the present plus 4 data points from the past, one of the deltas is zero.
}


\section{Error Bound Theorems}\label{bounds:sec}

In this section we return to the formula used in the main algorithm, and make the error terms more explicit, and provide some useful error term estimates.

\ignore{
\subsection{Introduction}

As a reminder, we started with the Taylor expansions \eqref{eq:taylor_for_f_with_deltas},
\begin{equation}
    \label{eq:taylor_for_f_with_deltas_repeat}
  f(t + \delta_j h) =
  \left(\sum_{i=0}^{n-1} f^{(i)}(t) \frac{\delta^i_j h^i}{i!} \right) + R_{n,j}
\end{equation}
which were arranged into a linear combination \eqref{eq:final_result_for_algorithm}
\begin{equation}
    \frac{1}{h^k} \sum_{j=1}^{n} c_j f( t + \delta_j h ) = f^{(k)}(t) + \frac{1}{h^k} \sum_{j=1}^n c_j R_{n,j}
\end{equation}
so that we have
\begin{equation}
  f^{(k)}(t) = \frac{1}{h^k} \sum_{j=1}^{n} c_j f( t + \delta_j h ) - \frac{1}{h^k} \sum_{j=1}^n c_j R_{n,j}
\end{equation}
and we set
\begin{equation}
  \label{eq:basic_form_of_error_term}
  R = - \frac{1}{h^k} \sum_{j=1}^n c_j R_{n,j}
\end{equation}
so that
\begin{equation}
  f^{(k)}(t) = \frac{1}{h^k} \sum_{j=1}^{n} c_j f( t + \delta_j h ) + R
\end{equation}
}

\subsection{A Simpler Form of a Particular Sum}

We are about to use the sum $S = \sum_{j=1}^n c_j \delta_j^n $ in the following discussion, so we start by working on a simpler closed form of the sum.

\begin{lemma}
    \label{lemma:for_cool_sum}
  The sum $S$ has the following closed form:
   $ S = \sum_{j=1}^n c_j \delta_j^n = (-1)^{k+n+1} k! \sigma_{n,n-k}(\vec{\delta}) $
\end{lemma}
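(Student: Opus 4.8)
The plan is to avoid substituting the explicit formula for $c_j$ into $S$ directly, since that would force us to evaluate a somewhat delicate symmetric-function sum over the $\widehat{\delta_j}$. Instead, I would exploit the fact that the defining conditions \eqref{eq:equations_vanish} and \eqref{eq:equation_k_factorial} already determine \emph{every} power-sum $\sum_{j=1}^n c_j \delta_j^i$ for $0 \le i \le n-1$: each such sum vanishes except the one at $i=k$, which equals $k!$. The quantity $S$ is the single ``out-of-range'' moment, at exponent $n$, so the strategy is to reduce $\delta_j^n$ to a combination of the lower powers $\delta_j^0, \ldots, \delta_j^{n-1}$ and then read off the answer from those already-known moments.

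The reduction comes from the monic polynomial whose roots are exactly the $\delta_i$. Writing $P(x) = \prod_{i=1}^n (x - \delta_i) = \sum_{r=0}^n (-1)^r \sigma_{n,r}(\vec{\delta})\, x^{n-r}$ and using that each $\delta_j$ is a root, I would obtain $\delta_j^n = \sum_{r=1}^n (-1)^{r-1} \sigma_{n,r}(\vec{\delta})\, \delta_j^{n-r}$. Substituting this into $S = \sum_{j=1}^n c_j \delta_j^n$ and interchanging the two finite sums gives $S = \sum_{r=1}^n (-1)^{r-1} \sigma_{n,r}(\vec{\delta}) \left( \sum_{j=1}^n c_j \delta_j^{n-r} \right)$.

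Now as $r$ runs from $1$ to $n$ the exponent $n-r$ runs through $n-1, \ldots, 0$, so every inner sum is one of the controlled moments. By \eqref{eq:equations_vanish} and \eqref{eq:equation_k_factorial} all of them vanish except the one with $n-r=k$, i.e.\ $r=n-k$, which equals $k!$; the hypothesis $0<k<n$ guarantees $1 \le n-k \le n-1$, so this index genuinely occurs in the range of $r$. Only that term survives, leaving $S = (-1)^{n-k-1} k!\, \sigma_{n,n-k}(\vec{\delta})$, and since $(-1)^{n-k-1} = (-1)^{k+n+1}$ this is exactly the claimed closed form. The only care needed is the bookkeeping of the indices and the parity of the sign; there is no genuine analytic obstacle, because the structural work was already done when the conditions fixing the $c_j$ were imposed.
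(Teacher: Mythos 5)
Your proof is correct, and it takes a genuinely different and shorter route than the paper's. The paper substitutes the explicit Cramer's-rule expression \eqref{eq:formula_for_cj_raw} for $c_j$ into $S$, applies the identity $\delta_j\,\sigma_{n-1,n-k-1}(\widehat{\delta_j}) = \sigma_{n,n-k}(\vec{\delta}) - \sigma_{n-1,n-k}(\widehat{\delta_j})$ to split $S$ into two sums, recognizes the first as a cofactor expansion of the Vandermonde determinant along its last row, and kills the second by recognizing it as the last moment condition for the coefficient vector $c_j^{(k-1)}$ of the $(k-1)$-th-derivative problem. You instead never touch the explicit formula: you expand $\delta_j^n$ via the monic polynomial $P(x)=\prod_{i=1}^n(x-\delta_i)$ vanishing at each $\delta_j$, interchange the finite sums, and let the defining moment conditions \eqref{eq:equations_vanish} and \eqref{eq:equation_k_factorial} annihilate every term except $r=n-k$; your sign $(-1)^{n-k-1}$ agrees with the claimed $(-1)^{k+n+1}$ since the exponents differ by the even number $2k+2$, and the hypothesis $0<k<n$ correctly places $r=n-k$ in range. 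Your argument is more elementary (it needs neither Lemma~1 nor Cramer's rule, only the linear system defining $\vec{c}$) and more robust, since it proves the identity for \emph{any} vector satisfying the moment conditions; it is essentially the paper's vanishing-second-sum observation stripped of the determinantal packaging. What the paper's longer computation buys in exchange is some structural insight --- the explicit link between the coefficient vectors for consecutive derivative orders $k$ and $k-1$, and another workout of the minor-determinant formula of Lemma~1 --- but as a proof of the stated closed form, yours is cleaner.
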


We will need the following estimate on $S$ as well
\begin{lemma}
  \label{lemma:cool_sum_absolute}
  The sum $S = \sum_{j=1}^n c_j \delta_j^n $ satisfies
    $ \displaystyle |S| \le \sum_{j=1}^n | c_j \delta_j^n | \le \frac{ \Delta^{2n-k-1} n!  }{\epsilon^{n-1} (n-k-1)! } $
\end{lemma}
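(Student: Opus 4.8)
The plan is to prove the two inequalities separately. The left inequality $|S| \le \sum_{j=1}^n |c_j \delta_j^n|$ is immediate from the triangle inequality, so all the work lies in bounding $\sum_{j=1}^n |c_j \delta_j^n|$ by the claimed quantity. For this I would \emph{not} use the closed form of $S$ from Lemma~\ref{lemma:for_cool_sum} (which collapses the signed sum and would lose the absolute values), but instead the explicit per-term formula for $c_j$ obtained earlier from Cramer's rule, namely
\begin{equation*}
c_j = \frac{(-1)^{n-k} k!\,\sigma_{n-1,n-k-1}(\widehat{\delta_j})}{\prod_{i\neq j}(\delta_j - \delta_i)}.
\end{equation*}
Taking absolute values gives $|c_j \delta_j^n| = k!\,|\sigma_{n-1,n-k-1}(\widehat{\delta_j})|\,|\delta_j|^n \big/ \prod_{i\neq j}|\delta_j - \delta_i|$, and I would bound the three factors one at a time.

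For the numerator, I would use two elementary estimates. First, $|\delta_j|^n \le \Delta^n$ directly from the definition of $\Delta$. Second, $\sigma_{n-1,n-k-1}(\widehat{\delta_j})$ is a sum of $\binom{n-1}{n-k-1}$ monomials, each a product of $n-k-1$ of the deltas, so each monomial has absolute value at most $\Delta^{n-k-1}$; hence $|\sigma_{n-1,n-k-1}(\widehat{\delta_j})| \le \binom{n-1}{n-k-1}\Delta^{n-k-1}$. For the denominator, each factor satisfies $|\delta_j - \delta_i| \ge \epsilon_j \ge \epsilon$ by the definitions of $\epsilon_j$ and $\epsilon$ in \eqref{eq:definition_of_stuff}, and there are $n-1$ such factors, so $\prod_{i\neq j}|\delta_j - \delta_i| \ge \epsilon^{n-1}$.

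Combining these, every summand obeys $|c_j \delta_j^n| \le k!\,\binom{n-1}{n-k-1}\,\Delta^{2n-k-1}\big/\epsilon^{n-1}$, a bound independent of $j$. Summing over the $n$ values of $j$ and then simplifying $n\cdot k!\,\binom{n-1}{n-k-1} = n!/(n-k-1)!$ (using $\binom{n-1}{n-k-1} = \binom{n-1}{k} = (n-1)!/[k!\,(n-k-1)!]$) yields exactly $\Delta^{2n-k-1} n! \big/ [\epsilon^{n-1}(n-k-1)!]$, as required; note that $0<k<n$ guarantees $n-k-1\ge 0$ so all the factorials and the symmetric polynomial are well defined. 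I expect the only real obstacle to be careful bookkeeping: correctly counting the $\binom{n-1}{n-k-1}$ terms of the symmetric polynomial and verifying that the leading factor of $n$ combines with $k!\binom{n-1}{n-k-1}$ to collapse into the clean factorial ratio. Every other step is a direct application of the definitions.
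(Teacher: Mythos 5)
Your proposal is correct and follows essentially the same route as the paper's own proof: both bound each term $|c_j\delta_j^n|$ via the Cramer's-rule formula for $c_j$, estimating $|\sigma_{n-1,n-k-1}(\widehat{\delta_j})| \le \binom{n-1}{n-k-1}\Delta^{n-k-1}$, $|\delta_j|^n \le \Delta^n$, and $\prod_{i\ne j}|\delta_j-\delta_i| \ge \epsilon^{n-1}$, then sum the $j$-independent bound over $n$ terms to collapse $n\,k!\binom{n-1}{n-k-1}$ into $n!/(n-k-1)!$. The only cosmetic difference is that the paper works from the unsimplified ratio $v(\widehat{\delta_j})/v(\vec{\delta})$ rather than the fully cancelled formula, which amounts to the same computation.
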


The proofs of the lemmas can be found in Appendices \ref{lemma_2_proof_appendix} and \ref{lemma_3_proof_appendix}, of the full version of the paper on {\tt arXiv.org}.

\subsection{Error Term Estimates}
The easiest to remember form of the error terms in \eqref{eq:taylor_for_f_with_deltas} is the Lagrange form of the error term:
\begin{equation}
  R_{n,j} = \frac{f^{(n)}(\xi_j) \delta_j^n h^n}{n!}
\end{equation}
for some $\xi_j$ in the interval between $t$ and $(t+\delta_j h)$. Using equation \eqref{eq:basic_form_of_error_term} we immediately have
\begin{equation}
  \label{eq:full_error_term_lagrange}
  R = - \frac{1}{h^k} \sum_{j=1}^n c_j \frac{f^{(n)}(\xi_j) \delta_j^n h^n}{n!} = - \frac{h^{n-k}}{n!} \sum_{j=1}^n c_j \delta_j^{n}  f^{(n)}(\xi_j).
\end{equation}
While this form of the error term is correct and exact, it is hard to use in practice, because the $\xi$s are unknown.

\begin{theorem}
If
\begin{enumerate}
  \item $f$ is $n$-times differentiable on an open set containing interval of consideration $I$,
  \item $| f^{(n)} (\tau) | \le M$ for all $\tau \in I$, with $I$ defined in \eqref{eq:definition_of_I}
  \item $\Delta$, $\epsilon_j$, $\epsilon$ as defined in \eqref{eq:definition_of_stuff}
\end{enumerate}
  then the algorithm's error term (see Equation~\eqref{eq:basic_form_of_error_term}) satisfies the following estimate
  \begin{equation}
    \label{eq:first_esitmate}
    |R| \le \frac{  M \Delta^{2n-k-1} }{\epsilon^{n-1} (n-k-1)!} h^{n-k}
  \end{equation}
\end{theorem}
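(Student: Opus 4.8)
The plan is to start from the exact Lagrange-form expression for the error already derived in \eqref{eq:full_error_term_lagrange}, namely
$$ R = -\frac{h^{n-k}}{n!} \sum_{j=1}^n c_j \delta_j^n f^{(n)}(\xi_j), $$
and bound it term by term. First I would take absolute values and apply the triangle inequality, pulling the common factor $h^{n-k}/n!$ out front, to obtain $|R| \le \frac{h^{n-k}}{n!} \sum_{j=1}^n |c_j \delta_j^n|\, |f^{(n)}(\xi_j)|$.

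The one point that genuinely needs an argument is that each $|f^{(n)}(\xi_j)|$ may be replaced by $M$. By Taylor's theorem each $\xi_j$ lies in the closed interval between $t$ and $t + \delta_j h$. Both endpoints belong to the interval of consideration $I$ of \eqref{eq:definition_of_I}: the point $t+\delta_j h$ by definition, and $t$ because in the intended setting one of the sampled points is the present (some $\delta_i = 0$) or the deltas straddle zero, so that $\min_i(t+\delta_i h) \le t \le \max_i(t+\delta_i h)$. Since $I$ is a closed interval, hence convex, the whole segment between $t$ and $t+\delta_j h$ lies in $I$, and therefore $\xi_j \in I$. Hypothesis~2 then gives $|f^{(n)}(\xi_j)| \le M$ for every $j$, so $M$ factors out and $|R| \le \frac{M h^{n-k}}{n!} \sum_{j=1}^n |c_j \delta_j^n|$.

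At this stage the derivative and the step size have been completely disposed of, and what remains is a purely combinatorial bound on $\sum_{j=1}^n |c_j \delta_j^n|$ in terms of $\Delta$, $\epsilon$, $n$, and $k$. This is precisely the content of Lemma~\ref{lemma:cool_sum_absolute}, which supplies $\sum_{j=1}^n |c_j \delta_j^n| \le \Delta^{2n-k-1} n! / (\epsilon^{n-1}(n-k-1)!)$. Substituting this into the previous display and cancelling the factor $n!$ yields exactly $|R| \le M \Delta^{2n-k-1} h^{n-k} / (\epsilon^{n-1}(n-k-1)!)$, which is \eqref{eq:first_esitmate}.

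Because the quantitative heart of the estimate is already packaged in Lemma~\ref{lemma:cool_sum_absolute}, I do not expect a serious obstacle in this theorem; the remaining work is essentially bookkeeping. The only step requiring real care is the membership $\xi_j \in I$, since the definition of $I$ contains the sampled points $t + \delta_i h$ but not automatically $t$ itself. I would either invoke the standing assumption that the present value is among the data (so some $\delta_i=0$), or, to be fully general, enlarge $I$ to include $t$; in every application considered in the paper this holds automatically, so the stated bound is valid as written.
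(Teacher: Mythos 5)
Your proof is correct and follows essentially the same route as the paper's: take absolute values in the Lagrange-form error \eqref{eq:full_error_term_lagrange}, replace each $|f^{(n)}(\xi_j)|$ by $M$, invoke Lemma~\ref{lemma:cool_sum_absolute} to bound $\sum_{j}|c_j\delta_j^n|$, and cancel the $n!$. The one place you go beyond the paper is your scrutiny of the membership $\xi_j \in I$: the paper's one-line calculation silently assumes $|f^{(n)}(\xi_j)| \le M$, which, as you correctly observe, requires $t$ itself to lie in $I$ of \eqref{eq:definition_of_I} --- automatic when some $\delta_i = 0$ or the deltas straddle zero, but genuinely false when all $\delta_j$ are strictly negative (a case the algorithm's input permits), so your proposed fix (assume $0 \in \{\delta_1,\ldots,\delta_n\}$ or enlarge $I$ to the convex hull of $\{t\} \cup \{t+\delta_i h\}$) is a legitimate minor repair to the theorem's hypotheses rather than a defect in your argument.
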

\begin{proof}
This is a direct calculation starting with \eqref{eq:full_error_term_lagrange}, and using Lemma \ref{lemma:cool_sum_absolute}:
\begin{equation}
  |R| \le
  \frac{h^{n-k}}{n!} \sum_{j=1}^n | c_j \delta_j^n f^{(n)}(\xi_j) | \le
  M \frac{h^{n-k}}{n!} \sum_{j=1}^n | c_j \delta_j^n | \le
  M \frac{h^{n-k}}{n!} \frac{ \Delta^{2n-k-1} n!  }{\epsilon^{n-1} (n-k-1)! }
\end{equation}
Rearranging terms we arrive at the estimate we're looking for.
\end{proof}

We note that another easy to remember form of error term in Taylor theorem is the \emph{little-oh} form. In our context, we rewrite \eqref{eq:taylor_for_f_with_deltas} in the form
\begin{equation}
\label{eq:individual_estimate_little_o_form_new}
  R_{n,j} =  \frac{f^{(n)}(t) \delta_j^n h^n}{n!} + o(\delta_j^n h^n)
\end{equation}
We must confess that in the above we wrote an extra term in the Taylor expansion for future use plus the actual error term. Since only $h$ is typically considered as a parameter approaching zero, and $\delta_j$ is a constant we can replace $o(h^n\delta_j^n)$ with $o(h)$, and
using that, we write \eqref{eq:basic_form_of_error_term} as
\begin{multline}
  \label{eq:error_estimate_little_o_form}
  R = - \frac{1}{h^k} \sum_{j=1}^n c_j \left( \frac{f^{(n)}(t) \delta_j^n h^n}{n!} + o(h^n) \right)  =
  - \frac{h^{n-k} f^{(n)}(t) }{n!}  \left( \sum_{j=1}^n c_j \delta_j^n \right) + o(h^{n-k}) \\
  = \frac{(-1)^{n-k} h^{n-k} k!  f^{(n)}(t) \sigma_{n,n-k}(\vec{\delta}) }{n!} + o(h^{n-k})
\end{multline}
where we used Lemma \ref{lemma:for_cool_sum} and the $o(h^{n-k})$ term is the sum of $n$ different $o(h^{n-k})$ terms
coming from \eqref{eq:individual_estimate_little_o_form_new}.
We state this result as the following theorem.

\begin{theorem}
\label{theorem:little_o_error_term}
If $f$ is $n$-times differentiable on an open set containing interval of consideration $I$
then the algorithm's error term in \eqref{eq:basic_form_of_error_term} is
\begin{equation}
    \label{eq:R_in_little_o_form}
  R = \frac{(-1)^{n-k} h^{n-k} k!  f^{(n)}(t) \sigma_{n,n-k}(\vec{\delta}) }{n!} + o(h^{n-k})
\end{equation}
\end{theorem}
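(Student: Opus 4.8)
The plan is to repeat the error-term computation of the previous theorem, but with the Lagrange remainder replaced by the Peano (little-oh) form of Taylor's theorem, which exposes the leading coefficient explicitly instead of burying it inside an unknown evaluation point $\xi_j$. Concretely, I would start from the definition $R = -\tfrac{1}{h^k}\sum_{j=1}^n c_j R_{n,j}$ in \eqref{eq:basic_form_of_error_term} and substitute the expansion \eqref{eq:individual_estimate_little_o_form_new}, namely $R_{n,j} = \tfrac{f^{(n)}(t)\delta_j^n h^n}{n!} + o(\delta_j^n h^n)$, for each $j$. The whole argument is then the identification of the surviving leading term via Lemma~\ref{lemma:for_cool_sum}, with everything else absorbed into a single $o(h^{n-k})$.

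The key steps, in order, are: (i) apply the Peano form of Taylor's theorem expanded at the single point $t$ — legitimate because the hypothesis makes $f$ $n$-times differentiable in a neighborhood of $t$, so that for all sufficiently small $h>0$ the points $t+\delta_j h$ lie in the differentiability set; (ii) insert this into the definition of $R$, use linearity of the finite sum, and pull the common factor $h^{n-k}$ out of the main term, so that the leading contribution becomes $-\tfrac{h^{n-k} f^{(n)}(t)}{n!}\sum_{j=1}^n c_j \delta_j^n$; (iii) recognize the inner sum as $S = \sum_{j=1}^n c_j \delta_j^n$ and invoke Lemma~\ref{lemma:for_cool_sum} to replace it by $(-1)^{k+n+1} k!\,\sigma_{n,n-k}(\vec{\delta})$; and (iv) simplify the resulting sign, using $(-1)\cdot(-1)^{k+n+1} = (-1)^{n-k}$, to land on the stated leading term.

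The main obstacle is not the algebra but the careful bookkeeping of the little-oh terms. I would need to argue that $o(\delta_j^n h^n)$ may be rewritten as $o(h^n)$ because $\delta_j$ is a fixed constant as $h\to 0^+$; that after multiplying by the fixed coefficient $c_j$ and dividing by $h^k$ each such term becomes $o(h^{n-k})$; and that the sum of the finitely many (exactly $n$) resulting contributions is again $o(h^{n-k})$ — this last step being valid precisely because the number of summands is finite and independent of $h$. The only genuine analytic input is the validity of the Peano remainder at $t$; the remainder of the work is a formal manipulation of the Landau symbol. Indeed, the displayed derivation \eqref{eq:error_estimate_little_o_form} preceding the statement already carries out exactly this chain of equalities, so the proof amounts to recording that computation together with the sign check in step (iv).
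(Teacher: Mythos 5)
Your proposal is correct and matches the paper's own proof essentially step for step: the paper also substitutes the Peano-form remainder \eqref{eq:individual_estimate_little_o_form_new} into \eqref{eq:basic_form_of_error_term}, replaces $o(\delta_j^n h^n)$ by $o(h^n)$ since each $\delta_j$ is a constant, absorbs the finitely many $o(h^{n-k})$ contributions into a single one, and applies Lemma~\ref{lemma:for_cool_sum} with the same sign simplification $-(-1)^{k+n+1} = (-1)^{n-k}$, exactly as carried out in the displayed computation \eqref{eq:error_estimate_little_o_form}. Nothing is missing; your little-oh bookkeeping is precisely the justification the paper relies on.
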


Sometimes it is useful to know if the algorithm's formula will always underestimate or always overestimate
the value of $f^{(k)}(t)$. Using
Theorem \ref{theorem:little_o_error_term} we are able to prove the following additional result.
\begin{theorem}
  Assume that all $\delta_j < 0$
  \begin{enumerate}
    \item If $f^{(n)}(t) > 0$, then the error term $R$ is positive for all sufficiently small values of $h$, and the estimate of $f^{(k)}(t)$ provided by \eqref{eq:final_result_for_algorithm} is an underestimate.
    \item If $f^{(n)}(t) < 0$, then the error term $R$ is negative for all sufficiently small values of $h$ and the estimate of $f^{(k)}(t)$ provided by \eqref{eq:final_result_for_algorithm} is an overestimate. 
    \end{enumerate}
\end{theorem}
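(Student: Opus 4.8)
\subsection*{Proof proposal}

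The plan is to read the sign of $R$ off its leading-order form, which Theorem~\ref{theorem:little_o_error_term} already provides, and then convert that sign into an under- or over-estimate of $f^{(k)}(t)$. By that theorem I may write
\[
  R = C\,h^{n-k} + o(h^{n-k}),
  \qquad
  C = \frac{(-1)^{n-k}\,k!\,f^{(n)}(t)\,\sigma_{n,n-k}(\vec{\delta})}{n!}.
\]
Since $0 < k < n$ gives $n-k \ge 1$, we have $h^{n-k} > 0$ for every $h > 0$ and $R/h^{n-k} \to C$ as $h \to 0^{+}$; hence, once $C \neq 0$ is known together with its sign, $R$ inherits the sign of $C$ for all sufficiently small positive $h$. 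This is the routine ``leading term dominates the remainder'' observation, so the whole theorem reduces to determining the sign of $C$.

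The substantive step is exactly this sign. Because $k!/n! > 0$, the sign of $C$ is governed by $f^{(n)}(t)$ together with $(-1)^{n-k}\sigma_{n,n-k}(\vec{\delta})$, and it is here that the hypothesis $\delta_j < 0$ for all $j$ is used. The polynomial $\sigma_{n,n-k}(\vec{\delta})$ is a sum of $\binom{n}{n-k}$ monomials, each a product of exactly $n-k$ of the $\delta_j$; being a product of $n-k$ negative numbers, every such monomial equals $(-1)^{n-k}$ times a positive magnitude. Since all monomials carry this same sign there is no cancellation, and factoring it out gives $\sigma_{n,n-k}(\vec{\delta}) = (-1)^{n-k}\,\sigma_{n,n-k}(|\delta_1|,\dots,|\delta_n|)$, where the latter is strictly positive (a sum of products of positive numbers). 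Therefore $(-1)^{n-k}\sigma_{n,n-k}(\vec{\delta}) = \sigma_{n,n-k}(|\delta_1|,\dots,|\delta_n|) > 0$, so $C$ is a strictly positive multiple of $f^{(n)}(t)$. In particular $C \neq 0$ whenever $f^{(n)}(t) \neq 0$, and the sign of $R$ for small $h$ coincides with the sign of $f^{(n)}(t)$.

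It remains to translate the sign of $R$ into the stated conclusion. The definition $R = -\tfrac{1}{h^k}\sum_{j} c_j R_{n,j}$ combined with \eqref{eq:final_result_for_algorithm} yields $\tfrac{1}{h^k}\sum_{j} c_j f(t+\delta_j h) - f^{(k)}(t) = -R$, so the algorithm's output lies below $f^{(k)}(t)$ exactly when $R > 0$ and above it exactly when $R < 0$. Hence $f^{(n)}(t) > 0$ forces $R > 0$ and an underestimate (case~1), while $f^{(n)}(t) < 0$ forces $R < 0$ and an overestimate (case~2). I expect the genuinely delicate points to be the no-cancellation observation of the previous paragraph --- which is precisely what the common sign of the $\delta_j$ secures, and which is what guarantees $C \neq 0$ so that the leading term, rather than the $o(h^{n-k})$ remainder, controls the sign --- together with the final sign bookkeeping, namely keeping straight that a positive error term $R$ corresponds to the estimate being too small.
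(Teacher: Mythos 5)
Your proposal is correct and takes essentially the same route as the paper's proof: both read the sign of $R$ from the leading term supplied by Theorem~\ref{theorem:little_o_error_term} and establish $(-1)^{n-k}\sigma_{n,n-k}(\vec{\delta})>0$ by noting that every monomial is a product of $n-k$ negative numbers with no cancellation (your factorization $\sigma_{n,n-k}(\vec{\delta}) = (-1)^{n-k}\sigma_{n,n-k}(|\delta_1|,\ldots,|\delta_n|)$ is just a cleaner packaging of the paper's even/odd case split). Your closing step, deriving from \eqref{eq:final_result_for_algorithm} that the estimate minus $f^{(k)}(t)$ equals $-R$, is also sound and is in fact spelled out more carefully than in the paper, which leaves that under/over-estimate translation implicit.
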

The proof of the theorem could be found in Appendix \ref{appendix:under_over_estimates}, of the full version of the paper on {\tt arXiv.org}.

\section{Acknowledgements}
The authors would like to acknowledge Dr. Mingshen Wu, Dr. Keith Wojciechowski, the referees of Modeling, Simulation, and Visualization 2011, and Joseph Bertino, recently graduated student of Fordham, who all commented on previous drafts of this paper.

\section*{The Bibliography}

\begin{enumerate}
\item G. Bard. ``Numerically Estimating Derivatives during Simulations.''
\emph{Proceedings of the 2011 International Conference on Modeling, Simulation \& Visualization Methods (MSV'11).}
(H. Arabnia and L. Deligiannidis, Eds.)
CSREA Press, 2011. (pp. 341--347).
ISBN: 1-60132-192-9
\end{enumerate}

\appendix

\section{Proof of Lemma 1}\label{lemma_1_proof_appendix}

\begin{lemma_repeat}
Consider $V(y_1, y_2, \ldots, y_n)$, with all $y$s distinct.
Let $M(i, j)$ denote the minor formed by
deleting the $i$th row and $j$th column from $V$.
The determinant of $M(i,j)$ is given by
\begin{equation}
m(i,j) = \det M(i,j) =
v( \widehat{y_j} ) \sigma_{n-1,n-i}(\widehat y_j)
\end{equation}
where $\sigma_{n-1,n-i}$ is the $(n-i)$-th-degree symmetric polynomial in $n-1$ variables
\end{lemma_repeat}

\begin{proof}
Instead of focusing on the single formula for the value of $m(i,j)$ we will find all values of $m(1, j), \ldots, m(n,j)$ in one calculation.
We start by setting up the matrix $V(y_1, \ldots, y_{j-1}, x, y_{j+1}, \ldots, y_n)$
\begin{equation}
  V(y_1, \ldots, y_{j-1}, x, y_{j+1}, \ldots, y_n)
  =
  \begin{bmatrix}
    1 & \ldots & 1 & 1 & 1 & \ldots & 1 \\
    y_1 & \ldots & y_{j-1} & x & y_{j+1} & \ldots & y_n \\
    y^2_1 & \ldots & y^2_{j-1} & x^2 & y^2_{j+1} & \ldots & y^2_n \\
    \vdots &  & \vdots & \vdots & \vdots & & \vdots \\
    y^{n-1}_1 & \ldots & y^{n-1}_{j-1} & x^{n-1} & y^{n-1}_{j+1} & \ldots & y^{n-1}_n     \\
  \end{bmatrix}
\end{equation}
and observing that the determinant of the matrix could be obtained via expansion along the $j$th column to yield an $(n-1)$th degree polynomial in $x$:
\begin{multline}
  p_{n-1}(x) = v(y_1, \ldots, y_{j-1}, x, y_{j+1}, \ldots, y_n)
  =
  a_0 + a_1 x + \cdots + a_{n-1} x^{n-1} = \\
  (-1)^{1+j} m(1,j) +
  (-1)^{2+j} m(2,j) x + \cdots +
  (-1)^{k+j} m(k,j) x^{k-1} + \cdots + (-1)^{n+j} m(n,j) x^{n-1}
\end{multline}
From the above, we have an expression for the coefficients of the polynomial:
\begin{equation}
\label{eq:one_form_of_coeff}
 a_{k-1} = (-1)^{k+j} m(k,j)
\end{equation}

Note that if $x=y_k$ for $k\neq j$ then columns $k$ and $j$ are identical, the determinant is zero,
and therefore $p_{n-1}(x)$ has a root at $x=y_k$. Because $p_{n-1}(x)$ has degree $(n-1)$,
and all the $y$s are distinct, this is a complete list of the roots:
\begin{equation}
  x = y_1, \hspace{0.1in} x=y_2, \hspace{0.1in} \ldots, \hspace{0.1in} x=y_{j-1}, \hspace{0.1in} x=y_{j+1}, \hspace{0.1in} \ldots, \hspace{0.1in} x=y_n
\end{equation}
we immediately have the following alternative expression for $p_{n-1}(x)$
\begin{equation}
  p_{n-1}(x) = a_{n-1} (x-y_1) \cdots (x-y_{j-1}) (x-y_{j+1}) \cdots (x-y_n)
\end{equation}
removing all the parentheses we obtain yet another form of the same polynomial,
made from Vieta's formulas:
\begin{multline}
  p_{n-1}(x) = a_{n-1} x^{n-1} +
  a_{n-1} (-1)^{1} \sigma_{n-1, 1}(\widehat y_j) x^{n-2} + \cdots \\
  +  a_{n-1} (-1)^{n-k} \sigma_{n-1,n-k}(\widehat y_j) x^{k-1} + \cdots +
  a_{n-1} (-1)^{n-1} \sigma_{n-1,n-1}(\widehat y_j)
\end{multline}
So, we arrive at still another expression for the coefficients of the polynomial:
\begin{equation}
  a_{k-1} = a_{n-1} (-1)^{n-k} \sigma_{n-1,n-k}(\widehat y_j)
\end{equation}
Equating the two expressions for the coefficients, we immediately obtain a formula for $m(k,j)$
\begin{equation}
  m(k,j) = (-1)^{k+j} a_{k-1} =
    (-1)^{k+j} a_{n-1}(-1)^{n-k} \sigma_{n-1,n-k}(\widehat y_j) =
    (-1)^{n+j} a_{n-1} \sigma_{n-1,n-k}(\widehat y_j)
\end{equation}
since by \eqref{eq:one_form_of_coeff}, we know
$$a_{n-1} = (-1)^{n+j} m(n,j) = (-1)^{n+j} v(\widehat y_j)$$
we have therefore
\begin{equation}
  m(k,j) = (-1)^{n+j} a_{n-1} \sigma_{n-1,n-k}(\widehat y_j) =
  (-1)^{n+j} (-1)^{n+j} v(\widehat y_j) \sigma_{n-1,n-k}(\widehat y_j) =
  v(\widehat y_j) \sigma_{n-1,n-k}(\widehat y_j)
\end{equation}
as we claimed.
\end{proof}

\section{Proof of Lemma 2}\label{lemma_2_proof_appendix}

\begin{lemma_repeat}
  The sum $S$ has the following closed form:
  \begin{equation}
    S = \sum_{j=1}^n c_j \delta_j^n = (-1)^{k+n+1} k! \sigma_{n,n-k}(\delta)
  \end{equation}
\end{lemma_repeat}

\begin{proof}
Using the formula for $c_j$ in its not yet fully simplified form from \eqref{eq:formula_for_cj_raw}, we see that
\begin{equation}
  S = \sum_{j=1}^n c_j \delta_j^n
  = \sum_{j=1}^n \delta_j^n (-1)^{k+j+1} k! \frac{v(\widehat \delta_j) \sigma_{n-1, n-k-1}(\widehat \delta_j)} {v(\delta)}
  = \frac{k!}{v(\vec{\delta})} \sum_{j=1}^n \delta_j^n (-1)^{k+j+1} v(\widehat \delta_j) \sigma_{n-1, n-k-1}(\widehat \delta_j)
\end{equation}
at this point we observe that
\begin{equation}
  \delta_j \sigma_{n-1, n-k-1} (\widehat \delta_j) =
  \sigma_{n, n-k}(\vec{\delta}) - \sigma_{n-1,n-k}(\widehat\delta_j)
\end{equation}
as a general property of symmetric polynomials
so that we can continue with simplification of $S$:
\begin{eqnarray}
S & = &
\frac{k!}{v(\vec{\delta})} \sum_{j=1}^n \delta_j^{n-1} (-1)^{k+j+1} v(\widehat \delta_j) \left( \sigma_{n, n-k}(\vec{\delta}) - \sigma_{n-1,n-k}(\widehat\delta_j) \right) \\
& = &
\frac{k!}{v(\vec{\delta})} \sum_{j=1}^n \delta_j^{n-1} (-1)^{k+j+1} v(\widehat \delta_j) \sigma_{n, n-k}(\vec{\delta}) -
\frac{k!}{v(\vec{\delta})} \sum_{j=1}^n \delta_j^{n-1} (-1)^{k+j+1} v(\widehat \delta_j) \sigma_{n-1,n-k}(\widehat\delta_j). \label{long_equation}
\end{eqnarray}
Now, we observe that the first sum in \eqref{long_equation} is almost an expansion of a Vandermonde determinant along the last row, since
\begin{eqnarray}
  v(\delta) & = & \sum_{j=1}^n (-1)^{n+j}  \delta_j^{n-1} m(n,j) \\
  & = & \sum_{j=1}^n (-1)^{n+j} \delta_j^{n-1} v(\widehat \delta_j ) \sigma_{n-1, 0} (\widehat \delta_j ) \\
  & = & \sum_{j=1}^n (-1)^{n+j} \delta_j^{n-1} v(\widehat \delta_j) \\
\end{eqnarray}
which gives us
\begin{eqnarray}
    \label{eq:result_of_first_sum}
  \frac{k!}{v(\delta)} \sum_{j=1}^n \delta_j^{n-1} (-1)^{k+j+1} v(\widehat \delta_j) \sigma_{n, n-k}(\delta)
  & = & \sigma_{n-1,n-k}(\delta) \frac{1}{v(\delta)} k! (-1)^{k+1-n} \sum_{j=1}^n (-1)^{n+j} v(\widehat \delta_j) \delta_j^{n-1} \\
& = &  \sigma_{n-1, n-k}(\vec{\delta}) \frac{1}{v(\vec{\delta})} k! (-1)^{k+1-n} v(\vec{\delta}) \\
  & = & (-1)^{k+1-n} \sigma_{n-1, n-k} (\vec{\delta}) k!
  \end{eqnarray}
Having dispatched the first sum, we can now consider the second sum 
from \eqref{long_equation}; it involves the coefficients for $c_j$ for the $(k-1)$-th derivative. We use the original formula for $c_j$ for the $k$-th derivative
from \eqref{eq:formula_for_cj_raw} which is
\begin{equation}
  c_j^{(k)} = (-1)^{k+j+1} k! v(\widehat \delta_j) \sigma_{n-1,n-k-1} (\widehat\delta_j) \frac{1}{v(\delta)}
\end{equation}
to obtain
\begin{equation}
  c_j^{(k-1)} = (-1)^{k+j} (k-1)! v(\widehat \delta_j) \sigma_{n-1,n-k}(\widehat\delta_j) \frac{1}{v(\delta)}
\end{equation}
and observe that the second sum in \eqref{long_equation} can become
\begin{eqnarray} 
  \frac{k!}{v(\delta)} \sum_{j=1}^n \delta_j^{n-1} (-1)^{k+j+1} v(\widehat \delta_j) \sigma_{n-1,n-k}(\widehat\delta_j)
 & = &
  -k   \sum_{j=1}^n (k-1)! \delta_j^{n-1} (-1)^{k+j} v(\widehat \delta_j) \sigma_{n-1,n-k}(\widehat\delta_j) \frac{1}{v(\delta)} \\
  & = & -k \sum_{j=1}^n \delta_j^{n-1} c_j^{(k-1)} \\
  & = & -k \sum_{j=1}^n A_{n,j} c_j^{(k-1)} \\
  & = & -k b_n^{(k-1)} = 0
\end{eqnarray}
for all values of $k$ that satisfy $0 < k < n$. We used $b_n^{(k-1)}$ to denote the last coordinate of the right-hand side vector for the problem of finding the $(k-1)$-th derivative.

So, equation \eqref{eq:result_of_first_sum} contains all there is to the sum $S$, and the proof is complete.
\end{proof}

\section{Proof of Lemma 3}\label{lemma_3_proof_appendix}
We will need the following estimate on $S$ as well
\begin{lemma_repeat}
  The sum $S = \sum_{j=1}^n c_j \delta_j^n $ satisfies
  \begin{equation}
    |S| \le \sum_{j=1}^n | c_j \delta_j^n | \le \frac{ \Delta^{2n-k-1} n!  }{\epsilon^{n-1} (n-k-1)! }
\end{equation}
\end{lemma_repeat}

\begin{proof}
Using the formula \eqref{eq:formula_for_cj_raw} for $c_j$ and the fact that
\begin{equation}
\left| \frac{v(\check\delta_j)}{v(\delta)} \right| = \frac{1}{\prod_{i\not=j} | \delta_i - \delta_j |} \le \frac{1}{\epsilon_j^{n-1}}
\end{equation}
we see that
\begin{multline}
  |c_j \delta_j^n | = k! \left| \frac{ v(\check \delta_j) }{v(\delta)} \right| |\sigma_{n-1, n-k-1}(\check\delta_j)| |\delta_j^n | \le
  k! \frac{1}{\epsilon_j^{n-1}} \binom{n-1}{n-k-1} \Delta^{n-k-1} \Delta^{n} \le  \frac{ k! \Delta^{2n-k-1} }{\epsilon^{n-1}}  \binom{n-1}{k} \\
  = \frac{ k! \Delta^{2n-k-1} }{\epsilon^{n-1}}  \binom{n-1}{k}
  = \frac{ \Delta^{2n-k-1} (n-1)!  }{\epsilon^{n-1} (n-k-1)! }
\end{multline}
and
\begin{equation}
  \sum_{j=1}^n |c_j \delta_j^n | \le
  \sum_{j=1}^n   \frac{ \Delta^{2n-k-1} (n-1)!  }{\epsilon^{n-1} (n-k-1)! } =  \frac{ \Delta^{2n-k-1} n!  }{\epsilon^{n-1} (n-k-1)! }
\end{equation}
\end{proof}

\section{When the Algorithm will Under/Over Estimate}
\label{appendix:under_over_estimates}
Sometimes it is useful to know if the algorithm's formula will always underestimate or always overestimate
the value of $f^{(k)}(t)$. Here, we show that
Theorem \ref{theorem:little_o_error_term} gives us the tools to prove the following additional result.
\setcounter{theorem_repeat}{2}
\begin{theorem_repeat}
  Assume that all $\delta_j < 0$
  \begin{enumerate}
    \item If $f^{(n)}(t) > 0$, then the error term $R$ is positive for all sufficiently small values of $h$, and the estimate of $f^{(k)}(t)$ provided by \eqref{eq:final_result_for_algorithm} is an underestimate.
    \item If $f^{(n)}(t) < 0$, then the error term $R$ is negative for all sufficiently small values of $h$ and the estimate of $f^{(k)}(t)$ provided by \eqref{eq:final_result_for_algorithm} is an overestimate. 
    \end{enumerate}
\end{theorem_repeat}

\begin{proof}
  Note that if $n-k$ is even, then $(-1)^{n-k} = 1$ and $\sigma_{n,n-k}(\vec{\delta}) > 0$ as a sum of products of even number of negative $\delta_i$'s.
  Also, if $n-k$ is odd, then $(-1)^{n-k} = -1$ and $\sigma_{n,n-k}(\vec{\delta}) < 0$ as a sum of products of odd number of negative $\delta_i$'s.
  In either case,
  \begin{equation}
    \label{eq:always_positive}
    (-1)^{n-k} \sigma_{n,n-k}(\vec{\delta}) > 0
  \end{equation}

  So, if $f^{(n)}(t)>0$, then
  \[ \frac{(-1)^{n-k} h^{n-k} k!  f^{(n)}(t) \sigma_{n,n-k}(\vec{\delta}) }{n!} > 0 \]
  and according to \eqref{eq:R_in_little_o_form}, the error term $R$ is positive for all small enough values of $h$.
Similarly, if $f^{(n)}(t)<0$, then
  \[ \frac{(-1)^{n-k} h^{n-k} k!  f^{(n)}(t) \sigma_{n,n-k}(\vec{\delta}) }{n!} < 0 \]
  and according to \eqref{eq:R_in_little_o_form}, the error term $R$ is negative for all small enough values of $h$.
\end{proof}

\section{Explicit Form of Coefficients in Special Cases}
\label{appendix_special_cases}
We derive some explicit formulas for coefficients produced by algorithm \eqref{eq:formula_for_cj_raw} in a couple of special cases that are common in applications.

Assume $\delta_i = -i$ for $i=1, \ldots, n$, the formula for coefficients $c_j$ simplifies in the case $k=1$ to
\begin{equation}
    \label{eq:special_case_k_is_one}
  c_j = (-1)^j \binom{n}{j} \left( - \frac{1}{j} + 1 + \frac{1}{2} + \cdots + \frac{1}{n}\right),
\end{equation}
and in the case $k=2$ to
\begin{equation}
    \label{eq:special_case_k_is_two}
  c_j = (-1)^j 2 \binom{n}{j} \left( \sigma_{n,2}\left(1, \frac12, \ldots, \frac1n\right) - \frac1j \left(- \frac{1}{j} + 1 + \frac{1}{2} + \cdots + \frac{1}{n}\right) \right)
\end{equation}

\emph{Proof.} We start with the general formula
\begin{equation}
\label{eq:formula_for_cks}
c_j = \frac{(-1)^{n-k} k!\sigma_{n-1,n-k-1}(\check \delta_j )}
{\prod_{i\not=j} \left ( \delta_j - \delta_i \right ) }
\end{equation}
and observe that for $\delta_i = -i$ the product in the denominator is
\begin{multline}
\label{eq:denominator_for_cks}
\prod_{i\not=j} \left ( \delta_j - \delta_i \right ) =
\prod_{i\not=j} \left ( -j + i \right ) \\ =
(-j + 1) (-j + 2) \cdots (-1) \cdot (1) \cdots (-j+n) \\ =
(-1)^{j-1} (j-1)! (n-j)!
\end{multline}

For $k=1$, the symmetric polynomial $\sigma_{n-1,n-k-1}$ becomes
\begin{equation}
\sigma_{n-1,n-k-1}(\check \delta_j) =
\sigma_{n-1,n-2}(\check \delta_j) =
\frac{1}{\delta_j} \delta_1 \ldots \delta_n \left( -\frac{1}{\delta_j} + \sum_{i=1}^n \frac{1}{\delta_i}
\right)
\end{equation}
which with $\delta_i = -i$ transforms into
\begin{equation}
\label{eq:symmetric_polynomial_for_k_one}
\sigma_{n-1,n-k-1}(\check \delta_j) =
-\frac{1}{j} (-1)^n n! \left( \frac{1}{j} - \sum_{i=1}^n \frac{1}{i} \right) =
(-1)^n \frac{1}{j} n! \left( -\frac{1}{j} + \sum_{i=1}^n \frac{1}{i} \right)
\end{equation}
Putting the results from \eqref{eq:denominator_for_cks},  and \eqref{eq:symmetric_polynomial_for_k_one} into \eqref{eq:formula_for_cks} for $k=1$, we arrive at
\begin{multline}
c_j = \frac{ (-1)^{n-1}  (-1)^n \frac{1}{j} n! \left( -\frac{1}{j} + \sum_{i=1}^n \frac{1}{i} \right) }{ (-1)^{j-1} (j-1)!(n-j)! }
= (-1)^j \frac{n!}{j! (n-j)!} \left( -\frac{1}{j} + \sum_{i=1}^n \frac{1}{i} \right) \\
= (-1)^j \binom{n}{j} \left( -\frac{1}{j} + \sum_{i=1}^n \frac{1}{i} \right)
\end{multline}
which is exactly formula \eqref{eq:special_case_k_is_one}.

For $k=2$, the symmetric polynomial $\sigma_{n-1,n-k-1}$ becomes
\begin{equation}
\label{eq_symmetric_polynomial_for_k_two_start}
\sigma_{n-1,n-3}(\check \delta_j) =
\frac{1}{\delta_j} \delta_1 \cdots \delta_n \cdot \sigma_{n-1,2}\left(\frac{1}{\delta_1}, \ldots, \frac{1}{\delta_{j-1}}, \frac{1}{\delta_{j+1}}, \ldots, \frac{1}{\delta_n} \right)
\end{equation}
focusing on the $\sigma_{n-1,2}\left(\frac{1}{\delta_1}, \ldots, \frac{1}{\delta_{j-1}}, \frac{1}{\delta_{j+1}}, \ldots, \frac{1}{\delta_n} \right)$ term, we see that
\begin{multline}
  \sigma_{n-1,2}\left(\frac{1}{\delta_1}, \ldots, \frac{1}{\delta_{j-1}}, \frac{1}{\delta_{j+1}}, \ldots, \frac{1}{\delta_n} \right) \\ =
  \sigma_{n,2}\left( \frac{1}{\delta_1}, \ldots, \frac{1}{\delta_n} \right) - \frac{1}{\delta_j} \left( \frac{1}{\delta_1} + \cdots + \frac{1}{\delta_n} - \frac{1}{\delta_j} \right)
\end{multline}
which for $\delta_i = -i$ becomes
\begin{multline}
    \label{eq_symmetric_polynomial_for_k_two_end}
  \sigma_{n-1,2}\left(\frac{1}{\delta_1}, \ldots, \frac{1}{\delta_{j-1}}, \frac{1}{\delta_{j+1}}, \ldots, \frac{1}{\delta_n} \right) \\ =
  \sigma_{n-1,2}\left(1, \ldots, \frac{1}{j-1}, \frac{1}{j+1}, \ldots, \frac{1}{n} \right) \\ =
  \sigma_{n,2} \left(1, \frac12, \ldots, \frac1n\right) - \frac{1}{j} \left( 1 + \cdots + \frac{1}{n} - \frac{1}{j} \right)
\end{multline}
So putting the results of \eqref{eq:denominator_for_cks}, \eqref{eq_symmetric_polynomial_for_k_two_start} and \eqref{eq_symmetric_polynomial_for_k_two_end} into \eqref{eq:formula_for_cks} for $k=2$ we obtain
\begin{multline}
c_j = \frac{(-1)^{n-2} \cdot 2!  \frac{1}{-j} (-1)^n n!  \left[ \sigma_{n,2} \left(1, \frac12, \ldots, \frac1n\right) - \frac{1}{j} \left( 1 + \cdots + \frac{1}{n} - \frac{1}{j} \right) \right]}{(-1)^{j-1} (j-1)!(n-j)!}
 \\ =
(-1)^j\cdot 2 \cdot \frac{n!}{j!(n-j)!} \left[ \sigma_{n,2} \left(1, \frac12, \ldots, \frac1n\right) - \frac{1}{j} \left( 1 + \cdots + \frac{1}{n} - \frac{1}{j} \right) \right] \\ =
(-1)^j \cdot 2 \cdot \binom{n}{j} \left[ \sigma_{n,2} \left(1, \frac12, \ldots, \frac1n\right) - \frac{1}{j} \left( 1 + \cdots + \frac{1}{n} - \frac{1}{j} \right) \right]
\end{multline}
which is exactly formula \eqref{eq:special_case_k_is_two}.

\end{document}